\newtheorem{theorem}{Theorem}[section]
\newtheorem*{theorem*}{Theorem}
\newtheorem*{remark*}{Remark}
\newtheorem*{problem*}{Problem}
\newtheorem*{conjecture*}{Conjecture}
\newtheorem*{question*}{Question}
\newtheorem{lemma}[theorem]{Lemma}
\newcommand{\rom}[1]{\uppercase\expandafter{\romannumeral #1\relax}}
\newcommand{\Q}{\mathbb{Q}}
\newcommand{\R}{\mathbb{R}}
\newcommand{\C}{\mathbb{C}}
\newcommand{\ra}{\mathfrak{a}}
\def\house#1{{%
    \setbox0=\hbox{$#1$}
    \vrule height \dimexpr\ht0+1.4pt width .4pt depth \dp0\relax
    \vrule height \dimexpr\ht0+1.4pt width \dimexpr\wd0+2pt depth \dimexpr-\ht0-1pt\relax
    \llap{$#1$\kern1pt}
    \vrule height \dimexpr\ht0+1.4pt width .4pt depth \dp0\relax
}}
\begin{document}

\title[On the lowest zero of Dedekind zeta function]{On the lowest zero of Dedekind zeta function}

\author[Sushant Kala]{Sushant Kala}

\address{Department of Mathematics\\ Institute of Mathematical Sciences (HBNI)\\ CIT Campus, IV Cross Road\\ Chennai\\ India-600113}
\email{sushant@imsc.res.in}
\date{}

\begin{abstract}
Let $\zeta_K(s)$ denote the Dedekind zeta-function associated to a number field $K$. In this paper, we give an effective upper bound for the height of first non-trivial zero other than $1/2$ of $\zeta_K(s)$ under the generalized Riemann hypothesis. This is a refinement of the earlier bound obtained by Omar Sami. 
\end{abstract}

\subjclass[2010]{11M26, 11R42}

\keywords{Dedekind zeta function, Low-lying zeros, Generalized Riemann hypothesis, Explicit formula}

\maketitle

\section{\bf Introduction}

Let $K/\Q$ be a number field. The Dedekind zeta-function associated with $K$ is defined on $\Re(s)>1$ as
$$
    \zeta_K(s) := \sum_{\ra} \frac{1}{N\ra^s}.
$$
Here $\ra$ runs over all non-zero integral ideals of $K$. This function has an analytic continuation to $\C$ except for a  simple pole at $s=1$. The zeros of $\zeta_K(s)$ in the critical strip $0<\Re(s) < 1$ are called the non-trivial zeros. One of the central problems in analytic number theory is to study the order and magnitude of these non-trivial zeros. The Generalized Riemann Hypothesis (GRH) says that all the non-trivial zeros of $\zeta_K(s)$ lie on the vertical line $\Re(s) = \frac{1}{2}$. Under GRH, one can consider the height of a zero, i.e., its distance from the point $s=1/2$. Define
\begin{equation*}
    \tau(K):= \min\{t>0, \zeta_K(1/2 + it) =0\},
\end{equation*}
the lowest height of a non-trivial zero of $\zeta_K(s)$ other than $1/2$. It is possible that $\zeta_K(\frac{1}{2})=0$, as shown by J. V. Armitage \cite{Armitage} in 1971. However, it is believed that as we vary over number fields, $\zeta_K(\frac{1}{2})$ vanishes very rarely. Indeed, K. Soundararajan \cite{Sound} showed that for a large proportion (87.5 \%) of quadratic number fields, $\zeta_K(\frac{1}{2}) \neq 0$.\\

One of the natural questions is to obtain upper and lower bounds on $\tau(K)$. The importance of studying $\tau(K)$ is evident from its connection to the discriminant of the number field, as highlighted in the survey paper by A. M. Odlyzko \cite{Odlyzko}. Furthermore, the low-lying zeros of $\zeta_K(s)$ also have repercussions to Lehmer's conjecture on heights of algebraic numbers (see \cite{Dixit-Sushant}). In 1979, J. Hoffstein \cite{Hoffstein} showed that for number fields $K$ with sufficiently large degree
$$
    \tau(K) \leq 0.87.
$$

\medskip
\noindent
For a number field $K$, denote by $n_K$ the degree $[K:\Q]$ and $d_K$ the discriminant $disc(K/\Q)$. Let $\alpha_K$ be the log root discriminant of $K$ defined as 
$$
\alpha_K : =\frac{\log |d_K|}{n_K}.
$$
In 1985, A. Neugebauer \cite{NG_1} showed the existence of a non-trivial zero of $\zeta_K(s)$ in the rectangle 
$$
\mathfrak{R}=\left\{ \,\, \sigma + i t \,\,\, | \,\,\, \frac{1}{2} \leq \sigma \leq 1,\,\,\, |t-T| \leq 10 \,\,\right\},
$$
\noindent
for every $T\geq 50$. Later in 1988, Neugebauer \cite{NG_2} derived an explicit upper bound, namely either  $\zeta_K(1/2)=0$ or
\begin{equation}\label{Neugebaur}
\tau(K) \leq \operatorname{min} \left\{ 60, \frac{64 \pi^2}{\log \big( \frac{1}{4} \log (82 + 27 \alpha_K) \big)}  \right\}.
\end{equation}
\medskip

Conjecturally, E. Tollis \cite{Tollis} asserts that
\begin{equation}\label{tollis}
    \tau(K) \ll \frac{1}{\log |d_K|},
\end{equation}
where the implied constant is absolute. Although this remains open, O. Sami \cite{Sami} showed that under GRH,
$$
\tau(K) \ll_{n_K} \frac{1}{\log \log \left(\left|d_K\right|\right)}.
$$
Thus, the lowest zero of the Dedekind zeta function converges to $\frac{1}{2}$ as we vary over number fields with a fix degree.\\

Let $\tau_0:= \tau(\Q) (=14.1347\ldots) $ be the lowest zero of the Riemann zeta-function $\zeta(s)$. Recall the famous Dedekind's conjecture, which states that $\zeta_K(s)/\zeta (s)$ is entire. Therefore, one expects $\zeta_K(1/2 + i\tau_0)=0$ for all number fields $K$. We obtain the following effective bound for the lowest zero of $\zeta_K(s)$.

\begin{theorem}\label{Main Theorem}
 Let $K$ be a number field such that the log root discriminant $\alpha_K > 6.6958$ and $\zeta_K(1/2) \neq 0$. Then, under GRH either $\tau(K) \geq \tau_0$ or
\medskip

\noindent
     $$
      \tau(K) \leq \frac{ \pi}{ \sqrt{2} \log \Big( \frac{\alpha_K - 1.2874}{5.4084} \Big)}.
     $$
\end{theorem}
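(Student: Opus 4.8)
The plan is to read the bound off Weil's explicit formula for $\zeta_K(s)$, applied to a test function whose bandwidth is a free parameter to be optimised against $\alpha_K$ at the very end; the two alternatives in the statement will fall out of the resulting positivity inequality. Under GRH the non-trivial zeros are $\rho=\tfrac12+i\gamma$ with $\gamma\in\R$, occurring in complex-conjugate pairs, and the hypothesis $\zeta_K(\tfrac12)\neq0$ guarantees that $\gamma=0$ is not among them. I would fix once and for all an even, smooth, non-negative profile $w$ supported in $[-1,1]$ with $w(0)=1$ and $\widehat w\ge0$ (for instance $w=\eta\ast\eta$ with $\eta\in C_c^\infty$ even and non-negative), and for a parameter $\Delta>0$ take $h=h_\Delta$ to be the even entire function with $\widehat h(u)=w(u/\Delta)$, which is admissible for the explicit formula. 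Then $h\ge0$ on $\R$, $h$ decays faster than any fixed power of $|t|$ away from the origin and is concentrated at scale $\Delta^{-1}$, while the arithmetic side of the explicit formula is a \emph{finite} sum over prime ideals with $N\rp^{m}<e^{\Delta}$. The explicit formula then takes the shape
\[
\sum_{\gamma}h(\gamma)\;=\;\frac{\log|d_K|}{2\pi}\,\widehat h(0)\;+\;\mathcal A_K(\Delta)\;-\;\mathcal P_K(\Delta)\;+\;h\!\left(\tfrac{i}{2}\right)+h\!\left(-\tfrac{i}{2}\right),
\]
where $\mathcal A_K(\Delta)=-\tfrac1{2\pi}\int_{\R}h(t)\,\Omega_K(t)\,dt$ with $\Omega_K(t)$ the archimedean kernel built from $\log\pi$, $\log2\pi$ and $\operatorname{Re}\tfrac{\Gamma'}{\Gamma}$ at $\tfrac14+\tfrac{it}{2}$ and $\tfrac12+\tfrac{it}{2}$, weighted by the numbers $r_1,r_2$ of real and complex places, and $\mathcal P_K(\Delta)=2\sum_{\rp}\sum_{m\ge1}\tfrac{\log N\rp}{N\rp^{m/2}}\widehat h(m\log N\rp)$.

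The next step is to bound the two remainder terms explicitly and uniformly in $\Delta$. For $\mathcal A_K$ I would use the elementary estimate $\operatorname{Re}\tfrac{\Gamma'}{\Gamma}(\sigma+i\tau)\le\tfrac12\log(\sigma^2+\tau^2)$ together with a sharp lower bound near the centre; integrating against the concentrated kernel $h$ then gives $|\mathcal A_K(\Delta)|\le n_K\,a(\Delta)$ with $a(\Delta)$ explicit and bounded on the relevant range of $\Delta$, and this is where the constant $1.2874$ originates. For $\mathcal P_K$ I would use that at most $n_K$ prime ideals lie above a given rational prime to reduce to a sum over rational prime powers, and then the prime number theorem — in the shape $\sum_{p\le x}\tfrac{\log p}{\sqrt p}\sim2\sqrt x$ with an explicit error — combined with partial summation against $u\mapsto w(u/\Delta)$, to get $|\mathcal P_K(\Delta)|\le n_K\,c_w\,e^{\Delta/2}$ for an explicit $c_w$ depending only on $w$; this is the origin of $5.4084$. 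Contributions of prime powers with $m\ge2$ and of ramified primes are lower order and get absorbed into $a(\Delta)$.

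Now divide the explicit formula by $n_K$, use $\log|d_K|=n_K\alpha_K$ and $\widehat h(0)=1$, and drop the non-negative pole terms:
\[
\frac1{n_K}\sum_{\gamma}h(\gamma)\;\ge\;\frac{\alpha_K}{2\pi}\;-\;a(\Delta)\;-\;c_w\,e^{\Delta/2}.
\]
Taking $\Delta$ as large as this permits, namely $\Delta=2\log\!\big(\tfrac{\alpha_K-1.2874}{5.4084}\big)$ — admissible exactly when $\alpha_K>6.6958=1.2874+5.4084$, which makes $\Delta>0$ — forces the right-hand side to be positive. I then split the left-hand side at the fixed height $\tau_0$. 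The contribution of the zeros with $|\gamma|\ge\tau_0$ is estimated by partial summation against the explicit Riemann–von Mangoldt formula for $\zeta_K$, whose error term is harmless there because $h$ is tiny at a fixed positive height once $\Delta$ is large; this contribution is smaller than the positive right-hand side. Hence there is a zero with $|\gamma|<\tau_0$: that is, either $\tau(K)\ge\tau_0$ — the first alternative, with nothing more to prove — or $\tau(K)<\tau_0$. In the latter case the deficit on the left must be carried by zeros of small height; using $h(\gamma)\ll\Delta(\Delta\gamma)^{-N}$ for $\gamma$ away from $0$, the bound $\sum_{\gamma}(\tfrac14+\gamma^2)^{-1}\ll\log|d_K|+n_K$, and the fact that $h$ is decreasing on $(0,T^\ast)$ where $T^\ast=\pi/(\sqrt2\log((\alpha_K-1.2874)/5.4084))$, one finds that the inequality cannot hold if $\tau(K)>T^\ast$, which gives the stated bound.

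The main obstacle I anticipate is not the architecture but the fully explicit, slack-free bookkeeping behind the last two paragraphs. Producing the precise constants requires pinning down the profile $w$ close to the extremal one in the underlying Fourier-analytic optimisation, obtaining tight numerical control of $\operatorname{Re}\tfrac{\Gamma'}{\Gamma}$ and of the short prime sum, and carrying the interplay between the decay of $h$, the zero-density bound and the archimedean kernel with no loss. I expect the sharpest point to be the prime-sum estimate — it governs the constant $5.4084$ in the denominator and, through the optimisation, the numerator $\pi/\sqrt2$ — together with checking near $\alpha_K=6.6958$ that the Riemann–von Mangoldt error term never overwhelms the comparatively thin discriminant main term.
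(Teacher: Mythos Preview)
Your architecture is right --- explicit formula, compactly supported test function scaled by a free parameter, prime sum reduced to rational primes via $\sum_{\rp\mid p}\log N\rp\le n_K\log p$ and controlled by Chebyshev --- but you have missed the one device that makes the argument close, and with these exact constants.

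The paper does \emph{not} use a profile with $\widehat w\ge0$. It uses Sami's function
\[
F(x)=(1-|x|)\cos(\pi x)+\tfrac{3}{\pi}\sin(\pi|x|)\qquad(|x|\le1),\qquad
\widehat F(u)=2\Bigl(2-\tfrac{u^{2}}{\pi^{2}}\Bigr)\Bigl[\tfrac{2\pi}{\pi^{2}-u^{2}}\cos\tfrac{u}{2}\Bigr]^{2},
\]
whose Fourier transform is $\ge0$ on $[-\sqrt2\,\pi,\sqrt2\,\pi]$ and $\le0$ outside. One sets $T=\sqrt2\,\pi/\tau(K)$ and applies the explicit formula to $F_T(x)=F(x/T)$. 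Under GRH the zero $\rho=\tfrac12+i\gamma$ contributes $T\widehat F(T\gamma)$; since $\zeta_K(\tfrac12)\neq0$, every $|\gamma|\ge\tau(K)$, hence $|T\gamma|\ge\sqrt2\,\pi$, hence \emph{every} zero contribution is $\le0$. The whole sum $\sum_\rho\Phi(\rho)$ is discarded with the correct sign in one line, leaving
\[
\log|d_K|\;\le\;2\sum_{\rp,m}\frac{\log N\rp}{N\rp^{m/2}}F_T(m\log N\rp)+n_K[\log(2\pi)+\gamma+2\log2]+r_1J(F_T)-n_KI(F_T),
\]
and the numerics give $\alpha_K-1.2874\le5.4084\,e^{T/2}$. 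Solving for $T=\sqrt2\,\pi/\tau(K)$ is the theorem. The number $\pi/\sqrt2$ is not the output of an optimisation over profiles; it is the sign-change of $\widehat F$.

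Your route --- $h\ge0$ on $\R$, a \emph{lower} bound $\tfrac{1}{n_K}\sum_\gamma h(\gamma)\ge c>0$, then a zero-density argument to force a small $\gamma$ --- does not close. The zero-counting function for $\zeta_K$ has main term $\tfrac{T}{\pi}\log|d_K|$, so even assuming $\tau(K)>T^{\ast}\sim\Delta^{-1}$ the sum $\tfrac{1}{n_K}\sum_{|\gamma|\ge T^{\ast}}h(\gamma)$ is of order $\alpha_K\cdot h(T^{\ast})\sim\alpha_K\Delta$, which dwarfs your $O(1)$ right-hand side rather than contradicting it; the decay $h(\gamma)\ll\Delta(\Delta\gamma)^{-N}$ is useless at $\Delta\gamma\approx1$. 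No choice of non-negative $h$ separates ``low zeros'' from the $\log|d_K|$-sized background by magnitude alone. Finally, the dichotomy with $\tau_0$ is not a splitting of the zero sum: in the paper one simply assumes $\tau(K)<\tau_0$, which gives $T>\sqrt2\,\pi/\tau_0\approx0.314$, the lower bound on $T$ needed for the numerical estimates of $J(F_T)$ and $I(F_T)$ to hold.
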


\medskip

\textbf{Remark.} One can improve this bound using Hoffstein's result \cite[pp. 194]{Hoffstein}, which states that $\tau(K) \leq 0.87$ for all number fields with sufficiently large degree. Indeed, the method of our proof shows that for number fields $K$ with sufficiently large degree, if $\alpha_K > 6.4435$, then under GRH 
\begin{equation*}
      \tau(K) \leq \frac{ \pi}{ \sqrt{2} \log \Big( \frac{\alpha_K - 1.2874}{5.1561} \Big)}.
\end{equation*}



\medskip

Next, we address the case where $\zeta_K(s)$ vanishes at $s=1/2$.

\begin{theorem}\label{Main Theorem 2}
     Suppose $K$ is a number field with $\alpha_K > 12.1048 $ and $\zeta_K(1/2) = 0$. Let
     
     $$
         A :=\frac{\alpha_K - 1.2874}{2\,\, \left( \frac{17.2}{\pi^2} \frac{\alpha_K}{ \log \log |d_K|} \right)} \text{   and  } B := \log \left( \frac{\alpha_K - 1.2874}{10.8168}\right).
     $$
\medskip

     \noindent
     Then, under GRH, either $\tau(K) \geq \tau_0$ or
     
     $$
      \tau(K) \leq \frac{\sqrt{2} \pi}{ \operatorname{min}\{ A, B \}}.
     $$
\end{theorem}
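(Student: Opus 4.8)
The plan is to run the Weil explicit formula for $\zeta_K$ against the one‑parameter family of even test functions used in the proof of Theorem~\ref{Main Theorem} — a rescaling $h_\lambda(t)=h(t/\lambda)$ of a fixed non‑negative kernel $h$ with well‑behaved Fourier transform — and to account for the extra zero at $s=\tfrac12$. Assume GRH and $\zeta_K(1/2)=0$, so that $m:=\operatorname{ord}_{s=1/2}\zeta_K(s)\ge 1$, and (the only case to treat) suppose $\tau(K)<\tau_0$, so that the non‑trivial zeros of $\zeta_K$ are $\tfrac12$ with multiplicity $m$ together with conjugate pairs $\tfrac12\pm i\gamma$, $\gamma\ge\tau(K)$. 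Writing the explicit formula as $\sum_\gamma h_\lambda(\gamma)=\mathcal A_\lambda(K)+\mathcal P_\lambda-\mathcal S_\lambda(K)$, with $\mathcal A_\lambda$ the archimedean term, $\mathcal P_\lambda$ the contribution of the pole at $s=1$ and of the trivial zeros on $\Re(s)\in\{0,1\}$, and $\mathcal S_\lambda$ the sum over prime ideals, positivity of $h_\lambda$ on $\R$ lets me retain only the central zero and the lowest conjugate pair:
\[
m\,h_\lambda(0)+2\,h_\lambda(\tau(K))\;\le\;\mathcal A_\lambda(K)+\mathcal P_\lambda-\mathcal S_\lambda(K).
\]
This is the backbone of the argument, and the term $m\,h_\lambda(0)$ is where the hypothesis $\zeta_K(1/2)=0$ is felt.

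Next I would estimate each piece explicitly. Stirling's formula together with bounds on $\Re\frac{\Gamma'}{\Gamma}$ give $\mathcal A_\lambda(K)=\widehat{h_\lambda}(0)\big(\log|d_K|-\mathcal I_\lambda\big)$ with $\mathcal I_\lambda\le n_K\,\widehat{h_\lambda}(0)\big(1.2874+\varphi(\lambda)\big)$ for an explicit increasing $\varphi$; this produces the combination $\alpha_K-1.2874$, and positivity of this combination at the scale finally chosen is what forces the hypothesis $\alpha_K>12.1048$. I would bound $|\mathcal P_\lambda|=O(n_K)$ explicitly, and — this is the essential use of GRH — control $\mathcal S_\lambda(K)$: either by arranging $\widehat{h_\lambda}\ge 0$ (so that $\mathcal S_\lambda\ge0$) or, when the support of $\widehat{h_\lambda}$ is short, by observing that $\mathcal S_\lambda$ is empty; either way $\mathcal S_\lambda$ drops out with the favourable sign. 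Finally, since $m\,h_\lambda(0)$ enters with the sign that is unhelpful for an \emph{upper} bound on $\tau(K)$, I would invoke the explicit bound (valid under GRH) $m=\operatorname{ord}_{s=1/2}\zeta_K(s)\le\frac{17.2}{\pi^2}\,\frac{\log|d_K|}{\log\log|d_K|}$, which is precisely the quantity occurring in the denominator of $A$.

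Assembling these estimates turns the displayed inequality into one of the shape $2\,h(\tau(K)/\lambda)\le \widehat{h}(0)\,n_K\big(\alpha_K-1.2874-\varphi(\lambda)\big)-m\,h(0)+O(n_K)$; dividing by $n_K$, inserting the multiplicity bound, and using that $h$ is an explicit function, decreasing near the origin, whose first sign change lies at a point whose reciprocal scales like $\log(\cdots)$, I would solve for $\tau(K)$ and optimise the dilation $\lambda$. Two regimes appear. When the bottleneck is the archimedean budget, the computation reproduces the estimate behind Theorem~\ref{Main Theorem}, except that the order‑$m$ zero at $\tfrac12$ is now wasted and costs a factor $2$ — this yields the constant $10.8168=2\cdot 5.4084$ and the term $B$. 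When the bottleneck is instead the multiplicity estimate, one gets the term $A$, with its $\log\log|d_K|$. Taking the worse of the two bounds produces $\tau(K)\le \sqrt2\,\pi/\min\{A,B\}$.

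The main obstacle is quantitative rather than conceptual: one must keep every explicit constant alive through the digamma estimates, the prime‑sum bound and the optimisation over $\lambda$, while simultaneously controlling the crude multiplicity bound — the source of the $\log\log|d_K|$ — so that it combines cleanly with the archimedean main term and the stated numbers (and the threshold $\alpha_K>12.1048$) actually come out.
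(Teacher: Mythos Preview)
Your plan has the key inequality running in the wrong direction. If $h_\lambda\ge 0$ on $\R$, then dropping all but the central zero and the lowest pair gives only the \emph{lower} bound $m\,h_\lambda(0)+2h_\lambda(\tau(K))\le\sum_\gamma h_\lambda(\gamma)$, and the explicit formula equates this sum to a quantity whose main term is $\widehat{h_\lambda}(0)\log|d_K|$. Your displayed inequality $2\,h(\tau(K)/\lambda)\le \widehat{h}(0)\,n_K(\alpha_K-1.2874-\varphi(\lambda))-m\,h(0)+O(n_K)$ therefore has a right-hand side of order $\log|d_K|$ while the left-hand side is bounded; it is vacuous and places no constraint on $\tau(K)$. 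There is also an internal inconsistency: you invoke ``positivity of $h_\lambda$ on $\R$'' to discard the higher zeros, yet later appeal to a ``first sign change'' of $h$ to solve for $\tau(K)$; these cannot both hold.

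What the paper actually does is the opposite manoeuvre. The function $\widehat{F}$ of Lemma~\ref{Lemma 1} changes sign at $\sqrt{2}\,\pi$, so with $T=\sqrt{2}\,\pi/\tau(K)$ one has $\widehat{F}_T(\gamma)\le 0$ for every $|\gamma|\ge\tau(K)$, and hence an \emph{upper} bound on the zero sum: $\sum_\rho\Phi(\rho)\le r\,\widehat{F}_T(0)=\tfrac{16}{\pi^2}\,rT$. Plugging this into \eqref{Exp_main} and using Lemma~3.1 together with \eqref{bound_2}--\eqref{bound_3} gives the nontrivial inequality $\alpha_K-1.2874\le 5.4084\,e^{T/2}+\tfrac{16}{\pi^2}\,\tfrac{r}{n_K}\,T$. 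Sami's bound $r\le(\log|d_K|+n_K/2)/\log\log|d_K|$ (not a bound of the form $\tfrac{17.2}{\pi^2}\cdot\ldots$; the $\tfrac{17.2}{\pi^2}$ arises as $\widehat{F}(0)=\tfrac{16}{\pi^2}$ times the factor absorbing the $n_K/2$ term) turns this into $c\le aT+be^{T/2}$, and Lemma~\ref{Lemma 3} yields $T\ge\min(A,B)$. In particular, the doubling $10.8168=2\cdot 5.4084$ and the threshold $\alpha_K>12.1048$ come from the crude splitting $T\ge\log(c/2b)$ in Lemma~\ref{Lemma 3} (compare with the sharp $T\ge 2\log(c/b)$ in Theorem~\ref{Main Theorem}), not from the central zero ``costing a factor $2$''.
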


From Tollis's conjecture \eqref{tollis}, it is clear that over any family of number fields $\{K_i\}$, the height of the lowest zero $\tau(K)$ tends to $0$. However, in Theorem \ref{Main Theorem} and \ref{Main Theorem 2} (also in \cite{Sami}), we show this for families of number fields $\{K_i\}$, where the root discriminant tends to infinity. This property is also discussed in \cite[Proposition 5.2]{Tsfasman}. Also note that the bound in Theorem \ref{Main Theorem 2} is weaker than Theorem \ref{Main Theorem}. This is perhaps indicative of the curious phenomena that a zero at $\frac{1}{2}$ pushes the next zero away from itself.
\medskip

\section{\bf Preliminaries}
\medskip

In this section, we state and prove some results which will be useful in the proof of main theorems.\\

\noindent
We first recall Weil's explicit formula. Let $F$ be a real valued even function satisfying the following conditions:

\begin{enumerate}[(i)]
    \item $F$ is continuously differentiable on $\mathbb{R}$ except at a finite number of points $a_i$ where $F(x)$ and its derivative $F^{\prime}(x)$ has only discontinuities of the first kind for which $F$ satisfies the mean condition, i.e. 
    $$
        F\left(a_i\right)=\frac{1}{2}\left(F\left( a_i+0\right)+F\left(a_i-0\right)\right).
    $$

    \item There exists $b>0$ such that $F(x)$ and $F^{\prime}(x)$ are $O\left(e^{-(1 / 2+b)|x |}\right)$ in the vicinity of $\infty$.
\end{enumerate}
\noindent
Then, the Mellin transform of $F$, given by 
$$
\Phi(s) :=\int_{-\infty}^{\infty} F(x) e^{(s-1 / 2) x} d x
$$
is holomorphic in any strip $-a \leq \sigma \leq 1+a$, where $0<a<b$, $a<1$. Then, we have the following explicit formula due to Weil \cite{weil} (formulated by Poitou).

\begin{theorem}[Weil]\label{explicit formula}
Let $F$ satisfy conditions (i) and (ii) above with $F(0)=1$. Then the sum $\sum \Phi(\rho)$ taken over the non-trivial zeros $\rho=\beta+i \gamma$ of $\zeta_K(s)$ with $|\gamma|<T$ has a limit when $T$ tends to infinity and its sum is given by the formula
\begin{align}\label{Weil}
\sum_{\rho} \Phi(\rho)= & \Phi(0)+\Phi(1)-2 \sum_{\mathfrak{p}} \sum_{m=1}^{\infty} \frac{\ln (N(\mathfrak{p }))}{N(\mathfrak{p})^{m / 2}} F(m \ln (N(\mathfrak{p}))) + \ln \left(\left|d_K\right|\right) \nonumber\\ 
&- n_K [\ln (2 \pi)+\gamma+2 \ln (2)]-r_1 J(F)+n I(F),
\end{align}
with
$$
J(F)=\int_0^{\infty} \frac{F(x)}{2 \operatorname{cosh}(x / 2)} d x, \quad I(F)=\int_0^{\infty} \frac{1-F(x)}{2 \operatorname{sinh}(x / 2)} d x
$$
and $\gamma=0.57721566 \ldots$ denotes the Euler-Mascheroni constant. Here $\mathfrak{p}$ runs over all the prime ideals of $K$ and $N(\mathfrak{p})$ denote the ideal norm of $\mathfrak{p}$.

\end{theorem}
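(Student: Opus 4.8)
The plan is to obtain \eqref{Weil} from the analytic data of the completed Dedekind zeta function by the contour-shift argument of Weil and Poitou. Introduce
$$
\xi_K(s):=|d_K|^{s/2}\bigl(\pi^{-s/2}\Gamma(s/2)\bigr)^{r_1}\bigl(2(2\pi)^{-s}\Gamma(s)\bigr)^{r_2}\zeta_K(s),
$$
which is entire of order one apart from simple poles at $s=0$ and $s=1$, satisfies $\xi_K(s)=\xi_K(1-s)$, and admits a Hadamard product over the non-trivial zeros $\rho$; logarithmic differentiation gives
$$
-\frac{\xi_K'}{\xi_K}(s)=\frac1s+\frac1{s-1}-B-\sum_\rho\Bigl(\frac1{s-\rho}+\frac1\rho\Bigr),
$$
a formula invariant under $s\mapsto 1-s$. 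Conditions (i)--(ii) on $F$ guarantee that $\Phi$ is holomorphic on the closed strip $-a\le\sigma\le 1+a$ (as noted above) and decays like $O(|t|^{-1})$ along vertical lines --- integrating by parts, the finitely many first-kind jumps of $F$ and $F'$ produce boundary terms of exactly this size, and Riemann--Lebesgue handles the remaining integral --- and that the prime sum in \eqref{Weil} converges absolutely because $F(m\ln N\rp)=O(N\rp^{-m(1/2+b)})$.

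The key step is to evaluate $\mathcal{I}:=\frac1{2\pi i}\int_{(1+a)}\bigl(-\frac{\xi_K'}{\xi_K}(s)\bigr)\Phi(s)\,ds$ in two ways. On the one hand, truncating at height $T$ and pushing the contour to $\sigma=-a$ crosses the poles at $s=1$ and $s=0$ (contributing $\Phi(1)$ and $\Phi(0)$) and the zeros $\rho$ with $|\gamma|<T$ (contributing $-\sum_{|\gamma|<T}\Phi(\rho)$); the horizontal segments are controlled via the standard estimate $\#\{\rho:|\gamma-T|\le 1\}\ll\ln(|d_K|T^{n_K})$ together with the decay of $\Phi$, and letting $T\to\infty$ --- the point at which the conditional, ordered-by-$|\gamma|$ convergence of $\sum_\rho\Phi(\rho)$ is needed --- yields
$$
\mathcal{I}-\frac1{2\pi i}\int_{(-a)}\Bigl(-\frac{\xi_K'}{\xi_K}(s)\Bigr)\Phi(s)\,ds=\Phi(0)+\Phi(1)-\sum_\rho\Phi(\rho).
$$
On the other hand, on the line $\sigma=-a$ one uses $-\frac{\xi_K'}{\xi_K}(s)=\frac{\xi_K'}{\xi_K}(1-s)$ together with the substitution $s\mapsto 1-s$; since $F$ is even one checks $\Phi(1-s)=\Phi(s)$, so that integral equals $-\mathcal{I}$. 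Combining, $\sum_\rho\Phi(\rho)=\Phi(0)+\Phi(1)-2\mathcal{I}$.

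It remains to compute $-2\mathcal{I}$ term by term. Writing $-\frac{\xi_K'}{\xi_K}(s)=-\tfrac12\ln|d_K|+\tfrac{r_1}2\bigl(\ln\pi-\tfrac{\Gamma'}{\Gamma}(s/2)\bigr)+r_2\bigl(\ln(2\pi)-\tfrac{\Gamma'}{\Gamma}(s)\bigr)-\frac{\zeta_K'}{\zeta_K}(s)$, the constant part produces $+\ln|d_K|$; Mellin inversion applied to $\Phi(s)=\int_0^\infty F(\ln y)\,y^{s-3/2}\,dy$ gives $\frac1{2\pi i}\int_{(1+a)}\Phi(s)N\rp^{-ms}\,ds=N\rp^{-m/2}F(m\ln N\rp)$, so the Euler-product term $-\frac{\zeta_K'}{\zeta_K}(s)=\sum_\rp\sum_{m\ge1}\tfrac{\ln N\rp}{N\rp^{ms}}$ contributes exactly $-2\sum_\rp\sum_{m\ge1}\tfrac{\ln N\rp}{N\rp^{m/2}}F(m\ln N\rp)$. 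The main obstacle is the archimedean (gamma) contribution: one shifts the integrals $\frac1{2\pi i}\int\frac{\Gamma'}{\Gamma}(s/2)\Phi(s)\,ds$ and $\frac1{2\pi i}\int\frac{\Gamma'}{\Gamma}(s)\Phi(s)\,ds$ onto the critical line $\sigma=\tfrac12$, inserts the integral representation $\frac{\Gamma'}{\Gamma}(z)=\int_0^\infty\bigl(\tfrac{e^{-t}}{t}-\tfrac{e^{-zt}}{1-e^{-t}}\bigr)\,dt$, interchanges the order of integration (legitimate by the decay of $\Phi$ and $F$), and identifies the resulting $x$-integrals --- after using $F(0)=1$ to regularize at $t=0$ --- with the integrals $J(F)$ and $I(F)$, together with the elementary constants $\gamma$, $\ln 2$, $\ln\pi$. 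Assembling these pieces with $n_K=r_1+2r_2$ collapses the archimedean part into precisely the remaining terms $-n_K[\ln(2\pi)+\gamma+2\ln 2]-r_1 J(F)+n\,I(F)$ of \eqref{Weil}, completing the derivation.
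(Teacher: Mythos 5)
The paper does not prove this theorem; it is quoted from Weil as formulated by Poitou \cite{weil}, and your outline reproduces exactly that classical derivation: the contour integral of $-\xi_K'/\xi_K\cdot\Phi$ shifted across the critical strip, the functional-equation symmetry $\Phi(1-s)=\Phi(s)$ giving the factor $2$, Mellin inversion for the prime sum, and the digamma integral representation for the archimedean terms. The signs, residues, and convergence justifications (decay of $\Phi$ on vertical lines, zero-counting on horizontal segments, conditional summation ordered by $|\gamma|$) all check out, so the proposal is correct and takes essentially the same approach as the cited source.
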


\medskip

Note that
$$
\Phi(0)+\Phi(1)=4 \int_0^{\infty} F(x) \operatorname{cosh}(x / 2) d x \text {. }
$$
If $\widehat{F}$ denotes the Fourier transform of $F$, then under GRH, we have $\Phi(\rho)=\widehat{F}(t)$, where $\rho=1 / 2+i t$. Set $F_T(x):= F\left(\frac{x}{T}\right)$, then $\widehat{F}_T(u)=T \widehat{F}(T u)$. We now recall two lemmas proved in \cite{Sami}.

\begin{lemma}\label{Lemma 1}(Sami)
Let $F$ be a compactly supported even function defined on $\R$ as
$$
F(x)= \begin{cases}(1-|x|) \cos (\pi x)+\frac{3}{\pi} \sin (\pi |x|) & \text { if } 0 \leq |x| \leq 1, \\ 0 & \text { else. }\end{cases}
$$
Then $F$ satisfies the growth conditions of explicit formula and
$$
\widehat{F}(u)=2 \left(2-\frac{u^2}{\pi^2}\right)\left[\frac{2 \pi}{\pi^2-u^2} \cos(u/2)\right]^2 .
$$
\end{lemma}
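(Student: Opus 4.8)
The plan is to dispose of the regularity hypotheses quickly and then compute $\widehat F$ by a direct, if slightly fiddly, trigonometric calculation. Since $F$ is supported in $[-1,1]$, condition (ii) holds for every $b>0$ vacuously. For condition (i) I would check continuity at the three candidate break-points: at $x=0$ the inner branch gives $F(0)=(1-0)\cos 0+\tfrac{3}{\pi}\sin 0=1$ (which simultaneously records the normalization $F(0)=1$), and at $x=\pm 1$ it gives $(1-1)\cos\pi+\tfrac{3}{\pi}\sin\pi=0$, matching the outer branch, so $F$ is continuous on all of $\R$. Differentiating the inner branch gives $F'(x)=2\cos(\pi x)-\pi(1-x)\sin(\pi x)$ on $(0,1)$, which is continuous there with finite one-sided limits $2$ as $x\to 0^+$ and $-2$ as $x\to 1^-$; by evenness $F'$ therefore has only first-kind (jump) discontinuities, located at $0$ and $\pm 1$, and $F$ itself, being continuous, trivially satisfies the mean-value condition at those points. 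Hence $F$ meets the hypotheses of Theorem~\ref{explicit formula}.

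For the Fourier transform I would use the paper's normalization, under which $\widehat F(u)=\int_{-\infty}^{\infty}F(x)e^{iux}\,dx=2\int_0^1 F(x)\cos(ux)\,dx$ because $F$ is even. Write $F=\Lambda\cdot\cos(\pi\,\cdot)+\tfrac{3}{\pi}\,h$, where $\Lambda(x)=(1-|x|)_+$ is the triangular (Fejér) function and $h(x)=\sin(\pi|x|)\mathbf{1}_{[-1,1]}(x)$; both summands are even, so the even extension built into the definition of $F$ is automatic. For the first summand, combine the elementary identity $\widehat\Lambda(u)=4\sin^2(u/2)/u^2$ with the modulation rule $\widehat{\Lambda\cos(\pi\,\cdot)}(u)=\tfrac12\big(\widehat\Lambda(u+\pi)+\widehat\Lambda(u-\pi)\big)$ and the evaluations $\sin\!\big((u\pm\pi)/2\big)=\pm\cos(u/2)$; this collapses to $4(u^2+\pi^2)\cos^2(u/2)/(u^2-\pi^2)^2$. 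For the second summand, integrate directly: $2\sin(\pi x)\cos(ux)=\sin((\pi+u)x)+\sin((\pi-u)x)$, and using $\cos(\pi\pm u)=-\cos u$ together with $1+\cos u=2\cos^2(u/2)$ one gets $\widehat h(u)=(1+\cos u)\big(\tfrac{1}{\pi+u}+\tfrac{1}{\pi-u}\big)=4\pi\cos^2(u/2)/(\pi^2-u^2)$.

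Finally I would add the two pieces over the common denominator $(\pi^2-u^2)^2$: the numerator is $4\cos^2(u/2)\big[(u^2+\pi^2)+3(\pi^2-u^2)\big]=8\cos^2(u/2)(2\pi^2-u^2)$, so $\widehat F(u)=8(2\pi^2-u^2)\cos^2(u/2)/(\pi^2-u^2)^2$, which rearranges to the stated form $2\big(2-u^2/\pi^2\big)\big[\tfrac{2\pi}{\pi^2-u^2}\cos(u/2)\big]^2$. It is worth remarking that the apparent singularities at $u=\pm\pi$ are removable---$\cos(u/2)$ vanishes there to exactly the order of $\pi^2-u^2$---so $\widehat F$ is in fact entire, as it must be by Paley--Wiener for a compactly supported $F$. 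There is no real obstacle here: the only point demanding care is keeping the signs straight in the modulation step and in evaluating the shifted trigonometric arguments, since a sign slip there would spoil the cancellation that produces the clean final answer.
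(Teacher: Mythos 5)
Your proof is correct: the regularity checks are right ($F$ is continuous everywhere, $F'$ has only jump discontinuities at $0$ and $\pm 1$, and compact support makes condition (ii) vacuous), and the Fourier computation via the decomposition $F=\Lambda\cos(\pi\,\cdot)+\tfrac{3}{\pi}h$ with the Fej\'er-kernel transform and the modulation rule does yield $\widehat F(u)=8(2\pi^2-u^2)\cos^2(u/2)/(\pi^2-u^2)^2$, which matches the stated formula. The paper itself offers no proof of this lemma --- it is simply quoted from Sami's article --- so there is nothing to compare against beyond noting that your direct verification is complete and the signs in the modulation step, the usual place to slip, are handled correctly.
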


\begin{lemma} \label{Lemma 3}(Sami)
Let $a, b, c$ be three positive real constants satisfying $c>$ $2 b$. If $T>0$ and $a T+b e^{T / 2} \geq c$, then
\begin{equation*}
T \geq \min \left(\frac{c}{2 a}, \ln \left( \frac{c}{2b} \right) \right) .
\end{equation*}
\end{lemma}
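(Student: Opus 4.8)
The statement is elementary, and the plan is to prove it by a short case analysis according to which of the two terms $aT$ and $b\,e^{T/2}$ carries the bulk of the mass $c$ in the hypothesis $aT + b\,e^{T/2} \ge c$. Concretely, I would fix $a,b,c>0$ with $c>2b$ and $T>0$ satisfying $aT + b\,e^{T/2}\ge c$, and split on whether $aT \ge c/2$ or $aT < c/2$.

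If $aT \ge c/2$, then directly $T \ge c/(2a) \ge \min\{c/(2a),\, \ln(c/(2b))\}$ and we are done. If instead $aT < c/2$, the hypothesis forces $b\,e^{T/2} \ge c - aT > c/2$, hence $e^{T/2} > c/(2b)$, and squaring gives $e^{T} > \bigl(c/(2b)\bigr)^2$. This is the only point at which the assumption $c>2b$ enters: it guarantees $c/(2b) > 1$, so $\bigl(c/(2b)\bigr)^2 \ge c/(2b)$, whence $e^{T} > c/(2b)$ and therefore $T > \ln(c/(2b)) \ge \min\{c/(2a),\, \ln(c/(2b))\}$. Combining the two cases proves the lemma.

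I do not anticipate a real obstacle here, since the argument uses nothing beyond the monotonicity of the exponential and the inequality $x^2 \ge x$ for $x \ge 1$; the sole subtlety worth flagging is that the hypothesis $c>2b$ is precisely what makes the squaring step legitimate, for without $c/(2b) \ge 1$ one could not pass from $e^{T/2} > c/(2b)$ to $e^{T} > c/(2b)$, and the bound $\ln(c/(2b))$ would fail. (As a by‑product the same case analysis in fact yields the marginally stronger bound $T \ge \min\{c/(2a),\, 2\ln(c/(2b))\}$, but only the weaker form recorded in the statement is what is invoked later, in the applications of Weil's explicit formula.)
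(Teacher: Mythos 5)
Your proof is correct: the two-case split on whether $aT \ge c/2$, together with the observation that $c > 2b$ makes $\ln\bigl(c/(2b)\bigr)$ nonnegative (so that the bound $T > 2\ln\bigl(c/(2b)\bigr)$ obtained in the second case implies the stated one), is exactly the standard argument. The paper itself states this lemma without proof, citing Sami, so there is nothing to compare beyond noting that your elementary case analysis is the expected one; your parenthetical remark that the argument actually yields the stronger bound with $2\ln\bigl(c/(2b)\bigr)$ is also accurate.
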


\medskip

\section{\bf Proof of main theorems}

\medskip

The proof of our theorems follows a similar method as in \cite{Sami}. We start with the following lemma.

\begin{lemma} Let $F_T(x)=F\left(\frac{x}{T}\right)$ as in explicit formula (\ref{Weil}). Then we have the following estimate.

\begin{equation}
  \sum_{\mathfrak{p}}\sum_{m=1}^{\infty} \frac{\log (N(\mathfrak{p}))}{N(\mathfrak{p})^{m / 2}} F_T(m \log (N(\mathfrak{p}))) \leq 1.2571\,\, n_K \,(2\,e^{T/2}-1),
\end{equation}
where the implied constant is absolute and $\mathfrak{p}$ runs over all prime ideals of $K$.
\end{lemma}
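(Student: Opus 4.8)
The plan is to bound $F_T$ from above by a constant multiple of the indicator of its support, and then reduce the resulting sum over prime ideals of $K$ to a classical Chebyshev sum over the rational primes.

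First I would compute $M := \sup_{x\in\R} F(x)$. Since $F$ is even and supported in $[-1,1]$, this reduces to maximizing $F$ on $[0,1]$. From $F'(x) = 2\cos(\pi x) - \pi(1-x)\sin(\pi x)$ and $F''(x) = -\pi\sin(\pi x) - \pi^2(1-x)\cos(\pi x)$ one sees that $F''<0$ on $(0,\tfrac12)$, while on $[\tfrac12,1)$ both terms of $F'$ are non-positive, so $F'<0$ there; hence $F$ is strictly increasing and then strictly decreasing, with a unique interior maximum at the root $x_0\in(0,\tfrac12)$ of $F'$, and a numerical evaluation gives $M = F(x_0) = 1.2097\ldots < 1.21$. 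In particular $0\le F_T(x)\le M\,\mathbf 1_{[-T,T]}(x)$ for every $x\in\R$ and every $T>0$.

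Since each term $\log(N(\mathfrak p))/N(\mathfrak p)^{m/2}$ is positive and $F_T$ vanishes outside $[-T,T]$, this gives
$$\sum_{\mathfrak p}\sum_{m=1}^{\infty}\frac{\log(N(\mathfrak p))}{N(\mathfrak p)^{m/2}}F_T(m\log N(\mathfrak p)) \;\le\; M\sum_{\substack{\mathfrak p,\ m\ge 1\\ N(\mathfrak p)^{m}\le e^{T}}}\frac{\log(N(\mathfrak p))}{N(\mathfrak p)^{m/2}} \;=\; M\sum_{N\mathfrak n\le e^{T}}\frac{\Lambda_K(\mathfrak n)}{\sqrt{N\mathfrak n}},$$
where $\Lambda_K$ is the von Mangoldt function of $K$ (so $\Lambda_K(\mathfrak p^{m})=\log N(\mathfrak p)$ and $\Lambda_K=0$ off prime powers). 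I would then pass from $K$ to $\Q$: for a rational prime power $p^{j}$, the ideals of norm $p^{j}$ supporting $\Lambda_K$ are the $\mathfrak p^{j/f}$ with $\mathfrak p\mid p$ of residue degree $f\mid j$, each contributing $f\log p$, so $\sum_{N\mathfrak n=p^{j}}\Lambda_K(\mathfrak n)\le(\log p)\sum_{\mathfrak p\mid p}f_{\mathfrak p}\le n_K\log p$ using $\sum_{\mathfrak p\mid p}e_{\mathfrak p}f_{\mathfrak p}=n_K$. Summing over $p^{j}\le x$ gives $\sum_{N\mathfrak n\le x}\Lambda_K(\mathfrak n)/\sqrt{N\mathfrak n}\le n_K\sum_{n\le x}\Lambda(n)/\sqrt n$. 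A partial summation with the explicit bound $\psi(t)<1.03883\,t$ (Rosser--Schoenfeld) then yields
$$\sum_{n\le x}\frac{\Lambda(n)}{\sqrt n}=\frac{\psi(x)}{\sqrt x}+\frac12\int_1^x\frac{\psi(t)}{t^{3/2}}\,dt\le 1.03883\,\bigl(2\sqrt x-1\bigr),$$
and taking $x=e^{T}$ and chaining all the inequalities bounds the left-hand side of the lemma by $M\cdot 1.03883\, n_K(2e^{T/2}-1)\le 1.2571\, n_K(2e^{T/2}-1)$, since $1.21\cdot 1.03883<1.2571$.

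The only step needing real care is the first one: $\max F$ lies just below $1.21$, so one must genuinely solve (or rigorously enclose the solution of) $F'(x_0)=0$, rather than settle for a cheap estimate such as $|F|\le 1+\tfrac3\pi\approx 1.955$, which would make the product with the Chebyshev constant far too large. Everything afterwards is routine partial summation together with the ramification identity $\sum_{\mathfrak p\mid p}e_{\mathfrak p}f_{\mathfrak p}=n_K$; in particular the lemma is unconditional, GRH playing no role here, and the exact value of the final constant depends only on which explicit form of the Chebyshev bound is inserted, any of the standard ones being adequate.
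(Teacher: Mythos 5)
Your proof is correct and follows essentially the same route as the paper: bound $|F|$ by $1.21$ (the paper simply asserts this where you verify $\sup F\approx 1.2097$), reduce the sum over prime ideals to $n_K\sum_{n\le e^T}\Lambda(n)/\sqrt{n}$ via the ramification identity, and finish by partial summation with the Rosser--Schoenfeld bound $\psi(t)\le 1.0389\,t$, giving the constant $1.21\times 1.0389<1.2571$. No substantive differences.
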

\begin{proof}
 Let $p$ be a rational prime. Since $
  \sum_{p | \mathfrak{p}} \log N(\mathfrak{p}) \leq n_K \log p$, we have  
  $$
  \sum_{p | \mathfrak{p}} \frac{\log N(\mathfrak{p})}{N(\mathfrak{p})^{m/2}} \leq n_K \frac{\log p}{p^{m/2}}.
 $$
 From the definition of $F(x)$, it can be obtained that $|F(x)| \leq 1.21$. Hence, the above inequality gives

\begin{align}\label{MT_1} \sum_{\mathfrak{p}, m} \frac{\log N(\mathfrak{p})}{N(\mathfrak{p})^{m / 2}} F_T(m \log N(\mathfrak{p})) \,\, &= \,\,
\sum_{m,p} \sum_{p | \mathfrak{p}} \frac{\log N(\mathfrak{p})}{N(\mathfrak{p})^{m / 2}} F_T(m \log N(\mathfrak{p})) \nonumber \\
&\leq \,\, 1.21 \, n_K \sum_{m \log p \leq T} \frac{\log p}{p^{m / 2 }} \nonumber \\
&=\,\,  1.21 \, n_K \sum_{n \leq e^T} \frac{\Lambda(n)}{\sqrt{n}} ,
\end{align}
where $\Lambda$ is the von Mangoldt function. Now, recall the Chebyshev function 
$$
\Psi(x) := \sum_{n \leq x} \Lambda(n).
$$ 
Applying partial summation and using the bound $\Psi(x) \leq 1.0389 \,x$ by Rosser \cite{Rosser}, we have

\begin{align}\label{MT_2}
\sum_{n \leq e^T} \frac{\Lambda(n)}{\sqrt{n}} \, \, &= \, \, \frac{\Psi(e^T)}{e^{T/2}} + \frac{1}{2} \int_1^{e^T} \frac{\Psi(t)}{t^{3/2}}dt \nonumber \\
&\leq 1.0389 \, \,  \Big( 2 e^{T/2} - 1 \Big).
\end{align}

\medskip

\noindent
From \eqref{MT_1} and \eqref{MT_2}, the lemma follows.
\end{proof}

\bigskip

Let
$T=\frac{\sqrt{2} \pi}{ \tau(K)}$ and $F(x)$ be the function defined in Lemma \ref{Lemma 1}.  Applying Theorem \ref{explicit formula} to $F_T(x)=F(x / T)$, we get

\begin{equation}
\begin{aligned}\label{Exp_main}
\sum_{\rho} \Phi(\rho) \,\,\, &= \,\,\,  \Phi_T(0)+\Phi_T(1)-2 \sum_{\mathfrak{p}, m} \frac{\log (N(\mathfrak {p}))}{N(\mathfrak{p})^{m / 2}} F_T(m \log (N(\mathfrak{p}))) \\
& +\log |d_K|-n_K[\log (2 \pi)+\gamma+2 \log (2)]-r_1 J\left(F_T\right)+n_K I\left(F_T\right).
\end{aligned}
\end{equation}

\medskip

Since $\tau(K) \leq \tau_0$, we have $T \geq 0.314$. For such $T$, the remaining terms on the right-hand side of (\ref{Exp_main}) can be bounded as 
\begin{align}
    J(F_T) &\,\,\,\,=\,\,\,\, \int_0^{T} \frac{F(x/T)}{2 \operatorname{cosh}(x / 2)} d x \,\,\,\,\leq\,\,\,\, 0.276 \,\, e^{T/2} \label{bound_2}
\end{align}

\medskip

\noindent
and
\begin{align}\label{bound_3}
    I(F_T) &\,\,\,\,=\,\,\,\, \int_0^{T} \frac{1-F(x/T)}{2 \operatorname{sinh}(x / 2)} d x \,\,\,\, \geq \,\,\,\, -0.1034 \,\, e^{T/2}. 
\end{align}



\bigskip
\noindent
We are now ready to prove our theorems.

\subsection{Proof of Theorem \ref{Main Theorem}}
\begin{proof}
\noindent
Since $\zeta_K(1/2) \neq 0$, equation (\ref{Exp_main}) gives
\begin{align*}
\log |d_K| + \Phi_T(0)+\Phi_T(1) &\leq 2 \sum_{\mathfrak{p}, m} \frac{\log (N(\mathfrak {p}))}{N(\mathfrak{p})^{m / 2}} F_T(m \log (N(\mathfrak{p}))) + n_K[\log (2 \pi)+\gamma+2 \log (2)] \\
&+ r_1 J\left(F_T\right)-n_K I(F_T).
\end{align*}
\noindent
Using Lemma \ref{Lemma 1} along with (\ref{bound_2}) and (\ref{bound_3}), we deduce
\begin{align*}
    \log |d_K| \,\,\,\, &\leq \,\,\,\, 5.4084 \,\, n_K \, e^{T/2} + 1.2874 \,\, n_K.
\end{align*}
\noindent
Thus,
\begin{align*}
   \alpha_K - 1.2874 \leq 5.4084 \,\,e^{T/2}.
\end{align*}
Hence, for $\alpha_K > 6.6958$ 
$$
T \geq 2 \log \Big( \frac{\alpha_K - 1.2874}{5.4084} \Big).
$$
\noindent
Since $T=\frac{\sqrt{2}\pi}{\tau(K)}$, the Theorem follows.
\end{proof}

\bigskip


\subsection{Proof of Theorem \ref{Main Theorem 2} }
\medskip

\begin{proof}
\noindent
Here $\zeta_K(\frac{1}{2})=0$ and therefore equation (\ref{Exp_main}) gives

\begin{align*}
\log |d_K| + \Phi_T(0)+\Phi_T(1) &\leq 2 \sum_{\mathfrak{p}, m} \frac{\log (N(\mathfrak {p}))}{N(\mathfrak{p})^{m / 2}} F_T(m \log (N(\mathfrak{p}))) + n_K[\log (2 \pi)+\gamma+2 \log (2)] \\
&+ r_1 J\left(F_T\right)-n_K I(F_T) + \frac{16}{\pi^2}\,r\,T,
\end{align*}
where $r$ is the order of $\zeta_K(s)$ at $1/2$. As before, using Lemma \ref{Lemma 1} along with (\ref{bound_2}) and (\ref{bound_3}), we deduce
\begin{align*}
    \log |d_K| \,\,\,\,&\leq\,\,\,\, 5.4084 \,\, n_K \, e^{T/2} + 1.2874 \,\, n_K + \frac{16}{\pi^2} \,\,r T .
\end{align*}
We now use the following bound on order of zero of $\zeta_K(s)$ at $s=1/2$ (see \cite[Proposition 1]{Sami}),
$$
r \leq \frac{\log |d_K|}{\log \log |d_K|} + \frac{n_K}{2 \log \log |d_K|}.
$$
Thus
\begin{align*}
    \alpha_K - 1.2874 \,\,\,\, & \leq 5.4084 \,\, e^{T/2} +  \Big( \frac{17.2}{\pi^2}\frac{\alpha_K} {\log \log |d_K|} \Big) T.
\end{align*}
\noindent
Using Lemma \ref{Lemma 3} with $a=\Big( \frac{17.2}{\pi^2}\frac{\alpha_K} {\log \log |d_K|} \Big), \, b=5.4084$ and $c=\alpha_K - 1.2874$, we conclude
  $$
      \tau(K) \leq \frac{\sqrt{2} \pi}{ \operatorname{min}\{ A, B \}},
  $$
where $A,B$ are as in the statement of the theorem. This concludes the proof.



\end{proof}

\section{\bf Computational data and concluding remarks}

\subsection{Computational data}Let $K=\Q(\beta)$ be a number field and $m_{\beta}(x)$ be the minimal polynomial of $\beta$. Using SageMath, we compare the lowest zero and the bounds obtained using Theorem \ref{Main Theorem}(see table \ref{table : 1}).

\begin{table}[h]
 \centering
  \renewcommand{\arraystretch}{1.5} 

\hspace{1cm}

\begin{tabular}{|c|c|c|c|}
\hline
$m_{\beta}(x)$ & $\alpha_K$ & $\tau(K)$  & Bound in Theorem \ref{Main Theorem} \\ 
\hline
$x^2 + 510510$ & 7.26472993307674 & 0.195366057287247 & 22.2098243056698 \\ \hline
$x^2 + 9699690$ & 8.73694942265996 & 0.250485767971509 & 6.93766313396318\\ \hline
$x^2 + 223092870$ & 10.3046965306245 & 0.282126995483731 & 4.34561699877460\\ \hline
$x^2 + 6469693230$ & 11.9883444456178 & 0.223870166465309 & 3.25543786648311\\ \hline
$x^2 + 200560490130$ & 13.7053380478603 & 0.0869456767128933 & 2.67260773966497\\ \hline
$x^3 + 30030$ &7.97191372931969 & 0.249553262973507 & 10.4864035098435\\ \hline
 $x^4 + 30030$ & 9.11875848185292 & 0.0668359001429184 & 6.00093283699129 \\ \hline
\end{tabular}
\caption{Comparing bound in Theorem \ref{Main Theorem} with the height of first zero}
\label{table : 1}
\end{table}

\medskip

On the other hand, comparing Theorem \ref{Main Theorem} with Neugebaur's bound in (\ref{Neugebaur}), observe that although the bound in (\ref{Neugebaur}) is unconditional, it applies only for the cases where $\alpha_K$ is very large  $(>  10^{64849})$. On the other hand, Theorem \ref{Main Theorem} applies for all $K$ with $\alpha_K \geq 6.6958$. 


\subsection{Concluding remarks.} 
The key idea in obtaining an upper bound for lowest zero is to establish a suitable explicit formula and apply it to a suitable test function. Upper bound for lowest zero for automorphic L-functions was obtained in \cite{Sami2}. Effective bounds for such results can also be obtained by following the method in this paper.

\section{\bf Acknowledgments} 
\noindent
I thank my advisor Dr. Anup Dixit for several fruitful discussions and helpful comments on the exposition of this paper. I am grateful to Dr. Siddhi Pathak for pointing out the result of Hoffstein in \cite{Hoffstein}. I also thank Prof. Jeffery Hoffstein for his support and encouragement.

\end{document}